\newtheorem{theorem}{Theorem}
\newtheorem{lemma}[theorem]{Lemma}
\definecolor{olivegreen}{rgb}{0.05,0.7,0.1}
\title{A note on  maximal plane subgraphs of the complete twisted graph  containing perfect matchings}
\author{Elsa Oma\~na-Pulido\thanks{eomagna@gmail.com} }
\author{Eduardo Rivera-Campo\thanks{erc@xanum.uam.mx} \thanks{Corresponding author}}
\affil{Departamento de Matem\'aticas\\Universidad Aut\'onoma Metropolitana - Iztapalapa\\Ciudad de M\'exico, M\'exico}
\date{}
\begin{document}

\maketitle

\begin{abstract}

The twisted graph $T_{n}$ is a drawing of the complete graph with $n$
vertices $v_{1},v_{2},\ldots ,v_{n}$ in which two edges $v_{i}v_{j}$
($i<j$) and $v_{s}v_{t}$ ($s<t$) cross if and only if
$i<s<t<j$ or $s<i<j<t$. We  show that for any maximal plane subgraphs $S$ and $R$ of $T_{n}$, each containing at least one  perfect matching, there is a sequence $S=F_0, F_1, \ldots, F_t=R$ of maximal plane subgraphs of $T_n$, also containing perfect matchings, such that for $i=0,1, \ldots, t-1$, $F_{i+1}$ can be obtained from $F_{i}$ by a single edge exchange i.e. $F_{i+1} = ((F_{i} - e) + f)$, where $e$ is an edge of $F_{i}$ not in $F_{i+1}$ and $f$ is an edge of $F_{i+1}$ not in $F_{i}$.

\bigskip

\noindent {\bf Keywords.} Twisted Graph, Max Graph, Matching Graph.

\end{abstract}

\section{Introduction}
\label{sec:intro}

Consider a set of points $P$ in the Euclidean plane. A \emph{topological graph}
with vertex set $P$ is a simple graph whose edges are drawn in the plane 
as Jordan curves in such a way that any two edges have at most one
point in common and any pair of non-incident edges with a point in common must cross at that point. A topological graph $G$ is a \emph{geometric graph} if the vertices of $G$ are in general position and all edges of $G$ are straight line segments. A geometric graph is a  \emph{convex geometric graph} if the set of vertices is in convex position.

Two topological graphs $G$ and $G^{\prime }$ with vertex sets $P$
and $P^{\prime }$ and edge sets $E$ and $E'$, respectively, are said to be \emph{weakly isomorphic} if there is a graph isomorphism $\alpha:P\rightarrow P^{\prime }$ such that for all $x, y, u, v \in P$, two edges $xy$ and $uv$ cross each other in $G$ if and only if the corresponding edges $\alpha \left( x\right) \alpha \left( y\right) $ and $\alpha \left(
u\right) \alpha \left( v\right) $ cross each other in $G^{\prime }$.

The \emph{twisted graph} $T_{n}$ is a complete topological graph
with $n$ vertices $v_{1},v_{2},\allowbreak \ldots ,v_{n}$ in which
two edges $v_{i}v_{j}$ ($i<j$) and $v_{s}v_{t}$ ($s<t$) cross each
other if and only either $i<s<t<j$ or $s<i<j<t$. See Fig. \ref{T_6}

\begin{figure}[h] 
\begin{center}
\includegraphics[width=.6\textwidth]{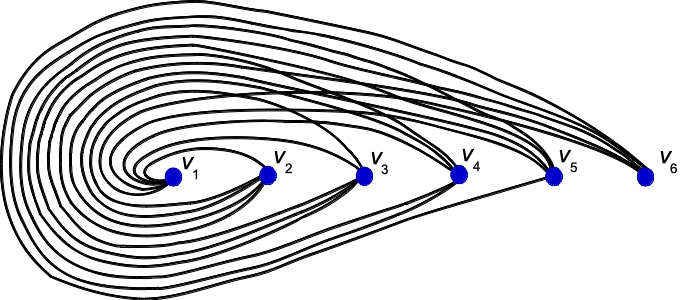}
\label{T_6} 
\caption{\small The twisted graph $T_6$.}
\end{center}
\end{figure}

Harborth and Mengersen \cite{HM} introduced twisted graphs as examples of complete topological graphs containing no
subgraphs weakly isomorphic to the complete convex
geometric graph $C_{5}$ on 5 vertices. Pach et al.
\cite{PST} proved that if $G$ is a complete topological graph with $n$
vertices, then $G$ contains a complete subgraph with $m\geq c\log
^{1/8}n$ vertices which is weakly isomorphic to either the complete
convex geometric graph $C_{m}$ or to the twisted graph $T_{m}$. Recently, Suk and Zeng \cite{SZ} improved this bound to $ \log^{1/4-o(1)}n$. 
Twisted graphs have been generalized by Aichholzer et al. \cite{AGTVW}.

Given a complete topological graph $F_n$ with $n$ vertices. A plane
subgraph $G$ of $F_n$ is a  \emph{maximal plane subgraph} of $F_{n}$ 
if for each edge $uv$ of $F_{n}$, not in $G$,
there is an edge $xy$ of  $G$ that crosses $uv$. The \emph{max
graph } $MP\left( F_{n}\right) $ of $F_{n}$ is the abstract graph
whose vertices are the maximal plane subgraphs of
$F_{n}$ in which two graphs $G$ and $ H $ are adjacent  if there are edges $g$ of $G$ and $h$ of $H$
such that $H=\left( G-g\right) +h$. In the case where $F_n$ 
is a complete geometric graph with vertex set $R$, the graph $MP\left(F_{n}\right) $ is called the \emph{graph 
of triangulations} of $R$ and is denoted by $\mathcal{T}(R)$. Lawson \cite{L} proved that for each set  of points $R$ in general position in the plane, the graph $\mathcal{T}(R)$ is connected. Later, Oma\~na-Pulido and Rivera-Campo \cite{OR} showed that $MP\left(T_{n}\right)$ is also connected.

Houle  et al.  \cite{HHNR} showed that if $R$ has an even number of points,  the subgraph $\mathcal{T_M}(R)$ of $\mathcal{T}(R)$, induced by those triangulations of $R$ that contain perfect matchings, is also connected.   In this note we use some of the ideas of  Houle  et al.  \cite{HHNR} and of  Oma\~na-Pulido and Rivera-Campo \cite{OR} to show that for any maximal plane subgraphs $R$ and $S$ of $T_{2m}$, each one containing at least one perfect matching, there is a sequence $R=F_0, F_1, \ldots, F_t=S$ of maximal plane subgraphs of $T_{2m}$, also containing perfect matchings, such that for $i=0,1, \ldots, t-1$, $F_{i+1}$ can be obtained from $F_{i}$ by a single edge exchange.

Throughout the paper we assume $T_{n}$ is represented as in Fig. 1 with  its vertices denoted by $v_{1},v_{2},\ldots ,v_{n}$ 
  from left to right.

\section{Preliminary results}
\label{sec:prel}

The \emph{matching graph} $\mathcal{M}(T_{2m})$ of $T_{2m}$ is the
abstract graph in which the vertices are the plane perfect
matchings of $T_{2m}$ and two matchings $L$ and  $N$ are adjacent
if the symmetric difference $L \bigtriangleup N$ is a plane cycle
with four edges.  Oma\~na-Pulido and Rivera-Campo \cite{OR}  proved that $\mathcal{M}(T_{2m})$ is always connected.

\begin{theorem} \label{teomatchings} \cite{OR}
For each positive integer $m$, the graph $\mathcal{M}\left( T_{2m}\right) $  is
connected.
\end{theorem}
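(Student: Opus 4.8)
The plan is to translate plane perfect matchings of $T_{2m}$ into lattice paths and to recognize the four-edge exchange as an elementary move on those paths. First I would record the basic observation that, by the definition of $T_{2m}$, two edges $v_iv_j$ and $v_sv_t$ (with $i<j$, $s<t$) cross exactly when one of the intervals $[i,j]$, $[s,t]$ strictly contains the other; hence a perfect matching of $T_{2m}$ is plane if and only if no two of its edges are \emph{nested}. To each plane perfect matching $M$ I would associate the word $w(M)\in\{U,D\}^{2m}$ whose $\ell$-th letter is $U$ if $v_\ell$ is the left endpoint of its edge and $D$ if it is the right endpoint. Since the left endpoint of every edge precedes its right endpoint, each prefix of $w(M)$ has at least as many $U$'s as $D$'s, so $w(M)$ is a Dyck path of semilength $m$. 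Conversely, matching each $D$ to the earliest so-far-unmatched $U$ (the \emph{first-in-first-out} rule) turns any Dyck path into a matching, and one checks this rule produces exactly the nested-free matchings. This yields a bijection between the vertices of $\mathcal{M}(T_{2m})$ and the Dyck paths of semilength $m$.

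Next I would identify the adjacency. Suppose $w(M)$ contains a factor $DU$ in positions $p,p+1$, so $v_p$ is a right endpoint, say of an edge $v_av_p$, and $v_{p+1}$ is a left endpoint, say of an edge $v_{p+1}v_b$, with $a<p<p+1<b$. Switching these two letters to $UD$ leaves the first-in-first-out rank of every other step unchanged, and a short rank computation shows the resulting matching $M'$ is obtained from $M$ by deleting $v_av_p$ and $v_{p+1}v_b$ and adding $v_av_{p+1}$ and $v_pv_b$. The four edges involved form the cycle $v_a v_p v_b v_{p+1} v_a$, whose intervals are pairwise non-nested, so $M\bigtriangleup M'$ is a plane four-cycle and $M,M'$ are adjacent in $\mathcal{M}(T_{2m})$. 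Thus every $DU\to UD$ swap of adjacent letters is realized by a single edge exchange between plane matchings.

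Finally I would run a sorting argument. The path $U^mD^m$ is the unique Dyck path of semilength $m$ containing no factor $DU$, and it corresponds to the matching $\{v_iv_{m+i}:1\le i\le m\}$. For any $M$ with $w(M)\neq U^mD^m$ there is a factor $DU$, and performing the associated exchange keeps the word a Dyck path while strictly decreasing the potential $\Phi(w)=\sum_{\ell:\,w_\ell=U}\ell$ by exactly $1$. Since $\Phi$ is bounded below, iterating reaches $U^mD^m$ in finitely many steps, so every vertex of $\mathcal{M}(T_{2m})$ is joined by a path to the fixed matching $\{v_iv_{m+i}\}$, and the graph is connected.

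The step I expect to require the most care is the second one: verifying that a single $DU\to UD$ swap alters the first-in-first-out matching in exactly two edges (and not more) and that the four resulting edges are genuinely pairwise non-crossing, i.e. form a plane four-cycle. The bookkeeping of opener/closer ranks across positions $p$ and $p+1$ must be carried out precisely, because the analogous swap at \emph{non-adjacent} positions in general both changes more than two edges and destroys the nested-free property, so it is essential that the factor $DU$ be genuinely adjacent.
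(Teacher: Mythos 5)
This statement is one the paper does not actually prove: Theorem~\ref{teomatchings} is quoted from \cite{OR} and used as a black box, so there is no internal proof to compare against, and your argument must be judged on its own merits. On those merits it is correct. The crossing rule of $T_n$ does make a matching plane exactly when no two of its edge-intervals are strictly nested; the first-in-first-out correspondence with Dyck words is the standard bijection (a matching is nonnesting if and only if its $i$-th left endpoint is matched to its $i$-th right endpoint); and your rank bookkeeping for an adjacent $DU\to UD$ swap is sound: ranks of all other letters are unchanged, so the swap replaces exactly $\{v_av_p,\,v_{p+1}v_b\}$ by $\{v_av_{p+1},\,v_pv_b\}$, the new matching is again nonnesting because it is the FIFO matching of a Dyck word, and the symmetric difference is a plane $4$-cycle. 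One phrase to tighten: the four cycle edges are \emph{not} pairwise non-nested as intervals --- for instance $[a,p]\subseteq[a,p+1]$ --- but such pairs share an endpoint, and the crossing condition of $T_n$ requires all four indices to be distinct and strictly interleaved in the pattern $i<s<t<j$ or $s<i<j<t$; the only pair with four distinct endpoints, $v_av_{p+1}$ and $v_pv_b$, interleaves ($a<p<p+1<b$) rather than nests, so no crossing occurs. The monovariant $\Phi$ drops by exactly $1$ per swap and $U^mD^m$ is the unique Dyck word with no $DU$ factor, so every vertex reaches the canonical matching $\{v_iv_{m+i}:1\le i\le m\}$; since the moves are reversible, connectivity follows. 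A pleasant by-product of your method, which a generic inductive proof would not give so readily, is an explicit diameter bound: $\Phi\le m^2$ (attained by $(UD)^m$) and $\Phi(U^mD^m)=m(m+1)/2$, so every vertex lies within $m(m-1)/2$ swaps of the canonical matching and the diameter of $\mathcal{M}(T_{2m})$ is at most $m(m-1)$.
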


Let $n \geq 3$ be an integer and $R$ be a maximal plane subgraph of $T_n$. For $i = 1, 2, \ldots, n-1$, let $N_R^+(v_i) = \{v_j : v_iv_j \in E(R) \text{ with } j > i \}$, $d_R^+(v_i) = \vert N_R^+(v_i) \vert$, $min_R(v_i) = min\{j : v_j \in N_R^+(v_i)\}$ and $max_R(v_i) = max\{j : v_j \in N_R^+(v_i)\}$. 

\begin{lemma}
    Let $n\geq 3$ be an integer and $R$ be a maximal plane subgraph of $T_n$. Then:
    
    i) All edges $v_1v_2,v_1v_3, v_{n-2}v_n$ and $v_{n-1}v_n$ are edges of $R$,

    ii) For $i=1, 2, \ldots, n-2$, $min_R(v_{i+1}) = max_R(v_i) \geq i+2$, and
    
    iii) For $i=1, 2, \ldots, n-2$, $N^+_R(v_i) = \{v_{min(v_i)}, v_{min(v_i)+1}, \ldots, v_{max(v_i)}\}$. 
\end{lemma} \label{maxpla}

\begin{proof}
\emph{i)} Edges $v_1v_2,v_1v_3, v_{n-2}v_n$ and $v_{n-1}v_n$ do not cross any edge of $T_n$. Therefore, they all must lie in $R$.

\emph{ii)} Clearly $min_R(v_{i+1}) \geq max_R(v_i)$ for $i=1, 2, \ldots, n-2$, otherwise $v_{i+1}v_{min_R(v_{i+1})}$ and $v_iv_{max_R(v_i)}$ would be edges of $R$ that cross each other. Suppose $min_R(v_{t+1}) > max_R(v_t)$ for some integer $1 \leq t \leq n-2$; in particular $v_tv_{min_R(v_{t+1})}$ is not an edge of $R$. Since $R$ is a maximal plane subgraph, edge $v_tv_{min_R(v_{t+1})}$ must cross some edge $v_qv_r$ of $R$. If $q<t<min_R(v_{t+1})<r$, then $q<t<max_R(v_t)<r$ which imples  that edges $v_qv_r$ and $v_tv_{max_R(v_t)}$ of $R$ cross each other, a contradiction. Suppose now $t<q<r<min_R(v_{t+1})$. Notice that $q \neq t+1$, otherwise $v_{t+1}v_r$ is an edge of $R$ with $t+1< r < min_R(v_{t+1})$, which is a contradiction. Then $t+1<q<r<min_R(v_{t+1})$, in which case edges $v_qv_r$ and $v_{t+1}v_{min_R(v_{t+1})}$ cross each other in $R$, once again a contradiction.

By \emph{i)}, $max_R(v_1) \geq 3$. Suppose $max_R(v_j) \geq j+2$ for $j=1, 2, \ldots, t$ and $max_R(v_{t+1}) \leq t+2$ for some integer $1 \leq t<n-2$. Let $l = max \{max_R(v_t), t+3\}$. Notice that $v_{t+1}v_l$ is not an edge of $R$ since $max_R(v_{t+1}) \leq t+2 < l$. As $R$ is a maximal plane subgraph of $T_n$, there is an edge $v_qv_r$ of $R$ that crosses $v_{t+1}v_l$. If $q<t+1<l<r$, then either $v_qv_r$ is an edge of $R$ with $q=t$ and $max_{R(v_t)} < r$ which is a contradiction or $q<t<l<r$ which implies that edges $v_qv_r$ and $v_tv_{max_R(v_t)}$ cross each other in $R$, again a contradiction. Therefore $max_R(v_i) \geq i+2$ for $i=1, 2, \ldots, n-2$.

\emph{iii)} Let $t=1, 2, \ldots, n-2$ and suppose $v_tv_j \notin N^+_R(v_t)$ for some $min_R(v_t)<j<max_R(v_t)$. Since $R$ is a maximal plane subgraph of $T_n$, there is an edge $v_qv_r$ of $R$ that crosses the edge $v_tv_j$. If $q< t < j < r$, then $q< t < min_R(v_t) < r$ and therefore $v_qv_r$ would cross the edge $v_tv_{min(v_t)}$ of $R$, which is not possible. Then $t < q < r < max_R(v_t)$ which implies  $v_qv_r$ crosses the edge $v_tv_{max(v_t)}$ of $R$. Also a contradiction.

\end{proof}

Notice that for any maximal plane subgraph $R$ of $T_n$ with $n \geq 3$, Lemma \ref{maxpla} implies $N^+_R(v_i) \neq \emptyset$ for $i=1, 2, \ldots, n-1$. For any pair $R$, $Q$ of maximal plane subgraphs of $T_n$ let 

\[   
k(R,Q) = 
     \begin{cases}
       max\{t:N_R^+(v_i) = N_Q^+(v_i) \text{ for } i \leq t\}  &\quad\text{ if } N_R^+(v_1) = N_Q^+(v_1) \\ 
       0  &\quad\text{ otherwise } \\
     \end{cases}
\]

and let $m(R,Q) = n-1 - k(R,Q)$.

\medskip

Let $F$ be a set of pairwise non-crossing edges of $T_n$. We denote by $MP(T_n, F)$ the subgraph of $MP(T_n)$ induced by the maximal plane subgraphs of $T_n$ that contain all edges in $F$. The following result  will be used, together with Theorem \ref{teomatchings}, in the proof of our main result. 

\begin{theorem}\label{fijas}
For each integer $n \geq 3$ and each set $F$ of pairwise non-crossing edges of $T_n$, the graph $MP(T_n, F)$ is  connected.
\end{theorem}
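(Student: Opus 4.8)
The plan is to argue by induction on the quantity $m(R,Q) = n-1-k(R,Q)$, transforming $R$ into $Q$ by single edge exchanges that never leave $MP(T_n,F)$. The starting point is a structural description of the maximal plane subgraphs of $T_n$ in terms of the sequence $\big(max_R(v_1), max_R(v_2), \ldots, max_R(v_{n-1})\big)$. Since two edges of $T_n$ cross exactly when one of the corresponding intervals is strictly nested inside the other, a set of pairwise non-crossing edges is precisely a non-nesting family of intervals; from this I would first show that in any maximal plane subgraph $R$ the values $max_R(v_i)$ form a non-decreasing sequence with $i+1 \le max_R(v_i) \le n$, and that the right-neighbourhood of each vertex is the block of consecutive indices $N_R^+(v_i) = \{v_j : L_i \le j \le max_R(v_i)\}$, where $L_i = \max(i+1,\, max_R(v_{i-1}))$. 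This yields the reconstruction lemma I need: the out-degree sequence $\big(d_R^+(v_1), \ldots, d_R^+(v_{n-1})\big)$ determines $max_R(v_1), \ldots, max_R(v_{n-1})$ one index at a time from the left, and hence determines $R$ completely. In particular $m(R,Q)=0$ forces $R=Q$, which is the base case of the induction.

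For the inductive step I would fix $R \neq Q$ in $MP(T_n,F)$ and set $k = k(R,Q)$. By the reconstruction lemma the agreement of the out-degrees on $v_1, \ldots, v_k$ upgrades to agreement of the whole left structure: $max_R$ and $max_Q$ coincide on $v_1, \ldots, v_k$, the graphs $R$ and $Q$ share every edge whose smaller endpoint has index at most $k$, and $max_R(v_{k+1}) \neq max_Q(v_{k+1})$. The crucial, and essentially free, observation is that $F \subseteq E(R)\cap E(Q)$; hence if I perform an edge exchange $R' = (R-g)+h$ with $g \in E(R)\setminus E(Q)$, then $g \notin F$ and so $F \subseteq E(R')$. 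That is, any flip that removes an edge present in $R$ but absent in $Q$ automatically stays inside $MP(T_n,F)$. This is exactly the extra ingredient needed beyond the connectivity of $MP(T_n)$, and it reduces the whole problem to producing, in $MP(T_n)$, a single such flip that strictly increases $k(R,Q)$.

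To produce that flip I would use the fact that a maximal plane subgraph of $T_n$ triangulates the disk, so each edge is the diagonal of a unique quadrilateral formed by its two incident faces, and the admissible single exchanges are exactly the diagonal flips of these quadrilaterals. Working at the first index of disagreement $v_{k+1}$, and using the consecutive-block form of the right-neighbourhoods, I would reduce by the symmetry between $R$ and $Q$ to the case $max_R(v_{k+1}) > max_Q(v_{k+1})$, and then flip the top edge of $R$ at $v_{k+1}$, namely $v_{k+1}v_{max_R(v_{k+1})}$, within its quadrilateral. Such a diagonal flip removes exactly one edge, which lies in $E(R)\setminus E(Q)$, adds exactly one, and leaves $max_R(v_1), \ldots, max_R(v_k)$ untouched; the new graph $R'$ therefore belongs to $MP(T_n,F)$ with $max_{R'}(v_{k+1})$ one step closer to $max_Q(v_{k+1})$, so $k(R',Q) > k(R,Q)$. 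Induction then furnishes a path from $R$ to $Q$ inside $MP(T_n,F)$.

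The main obstacle is precisely the verification in this last step. When $max_R$ is constant over a run of indices starting at $v_{k+1}$, the vertex $v_{max_R(v_{k+1})}$ receives a whole fan of edges, and one cannot simply \emph{insert} the next edge of $Q$ at $v_{k+1}$ in a single exchange, since several edges of $R$ would be nested inside it. This is why I intend to flip the larger graph \emph{downward} rather than insert into the smaller one: decreasing a top edge of the fan is a purely local diagonal flip. The delicate points are identifying the two incident faces of $v_{k+1}v_{max_R(v_{k+1})}$ in each configuration, checking that the added edge is not incident to any $v_i$ with $i \le k$ (so the common left part is preserved), and confirming that $R'$ is again maximal and plane in the boundary cases where $d_R^+(v_{k+1}) = 1$ or the fan meets the end of the vertex set. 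Carrying out this local case analysis, so that every configuration yields exactly one removed and one added edge, is the technical heart of the argument.
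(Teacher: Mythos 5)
Your proposal follows essentially the same route as the paper: induction on $m(R,Q)=n-1-k(R,Q)$, with the base case supplied by the fact that the out-degree sequence determines the graph, and the inductive step carried out by flipping down the top edge $v_{k+1}v_{max_R(v_{k+1})}$ at the first index of disagreement, the set $F$ being preserved because every removed edge lies in $E(R)\setminus E(Q)$ and hence outside $F$. (The paper's version of your flip names the other diagonal explicitly, e.g.\ $S_1=(R-v_1v_{max_R(v_1)})+v_2v_{max_R(v_1)-1}$ when $k=0$; identifying that edge is precisely the local verification you defer, and the paper itself only asserts it with a figure.) The one genuine slip is your claim that a \emph{single} such flip already gives $k(R',Q)>k(R,Q)$: one flip decreases $d_R^+(v_{k+1})$ by exactly one, so whenever $d_R^+(v_{k+1})-d_Q^+(v_{k+1})>1$ the degrees at $v_{k+1}$ still disagree after the flip, $k$ and hence $m$ are unchanged, and your induction hypothesis does not yet apply. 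The repair is exactly what the paper does: iterate the flip $l=max_R(v_{k+1})-max_Q(v_{k+1})$ times to build a path $R,S_1,\ldots,S_l$ inside $MP(T_n,F)$ with $d_{S_l}^+(v_{k+1})=d_Q^+(v_{k+1})$, and only then invoke the induction hypothesis on the pair $(S_l,Q)$.
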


\begin{proof}
    Let $F$ be a set of pairwise non-crossing edges of $T_n$ and $R$ and $Q$ be maximal plane subgraphs of $T_n$ with $F \subset E(R) \cap E(Q)$. We shall proof by induction on $m(R,Q)$ that $MP(T_n, F)$ contains a path joining $R$ and $Q$.

    If $m(R,Q) = 0$, then $k(R,Q) = n-1$. This implies $N_R^+(v_i) = N_Q^+(v_i)$ for $i=1, 2, \ldots, n-1$ and therefore, $R = Q$. We proceed assuming that for some integer $t \geq 0$ the result holds whenever $R'$ and $Q'$ are maximal planar subgraphs of $T_n$ with $F \subset E(R') \cap E(Q')$ and $m(R',Q') \leq t $.

    Let $R$ and $Q$ be maximal planar subgraphs of $T_n$ with $F \subset E(R) \cap E(Q)$ and $m(R,Q) = t +1 $. Let $k=k(R,Q)$. 
    
    Then, $N_R^+(v_i) = N_Q^+(v_i) \text{ for } i \leq k$ and $N_R^+(v_{k+1}) \neq N_Q^+(v_{k+1})$; without loss of generality we assume $d_R^+(v_{k+1}) > d_Q^+(v_{k+1})$. Notice that by Lemma \ref{maxpla}, $min_R(v_{k+1}) = max_R(v_{k})= max_Q(v_{k}) = min_Q(v_{k+1})$. Then $max_R(v_{k+1}) > max_Q(v_{k+1})$, and therefore  $v_{k+1}v_{max_R(v_{k+1})}$ is an edge of $R$  which is not an edge of $Q$. Also by Lemma \ref{maxpla}, $max_Q(v_{k+1}) \geq k+3$; this implies $max_R(v_{k+1}) \geq k+4$ and therefore $v_{k+2}v_{max_R(v_{k+1})-1}$ is an edge of $T_n$. Notice that $v_{k+2}v_{max_R(v_{k+1})-1}$ is not an edge of $R$ since it crosses the edge $v_{k+1}v_{max_R(v_{k+1})}$ which does lie in $R$.  Let 
 $$ S_1 = (R - v_{k+1}v_{max_R(v_{k+1})}) + v_{k+2}v_{max_R(v_{k+1})-1}. \text{ See Fig. } \ref{R-S1} \text{ for the case } k =0.$$

\begin{figure}[h] 
\begin{center}
\includegraphics[width=.7\textwidth]{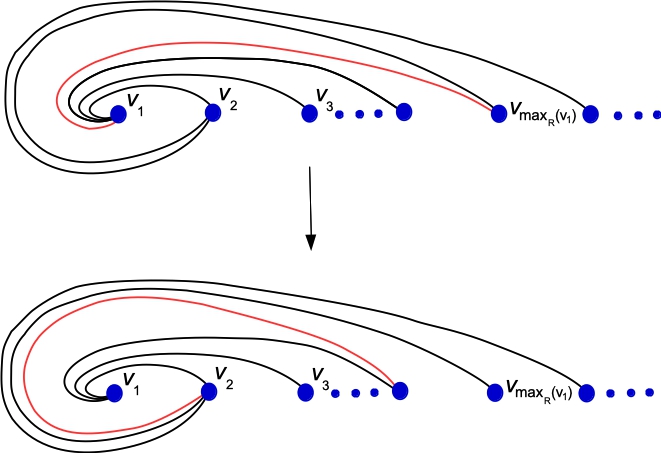}
\label{R-S1} 
\caption{\small Graphs $R$ (above) and $S_1$ (bellow).}
\end{center}
\end{figure}

    We claim $S_1$ is a maximal plane subgraph of $T_n$ adjacent to $R$ in $MP(T_n, F)$. To prove this, assume first that $S_1$ is not a plane graph, then the added edge $v_{k+2}v_{max_R(v_{k+1})-1}$ must cross an edge $v_qv_r$ of $R$, other than the deleted edge $v_{k+1}v_{max_R(v_{k+1})}$. If $k+2 < q < r < max_R(v_{k+1})-1$, then $v_qv_r$ crosses the edge $v_{k+1}v_{max_R(v_{k+1})}$ in $R$ which is not possible. Suppose now $q < k+2  < max_R(v_{k+1})-1 < r$. If $q=k+1$, then $v_qv_r$ is an edge of $R$ with $q=k+1$ and $r \geq  max_R(v_{k+1})$; this implies $v_qv_r$ is the edge $v_{k+1}v_{max_R(v_{k+1})}$ which is not an edge of $S_1$, therefore $q<k+1$. In this case notice that $v_qv_r$ crosses the edge 
    $v_{k+1}v_{max_R(v_{k+1})-1}$ which is an edge of $R$ by Lemma \ref{maxpla}, since $min_R(v_{k+1}) = min_Q(v_{k+1}) \leq max_Q(v_{k+1})  < max_R(v_{k+1})$. Again this is not possible. 

  Next, assume there is an edge $v_qv_r$ of $T_n$, not in $S_1$, such that $S_1 + v_qv_r$ is a plane graph. Since $R$ is a maximal plane subgraph of $T_n$, edge $v_qv_r$ must cross the edge $v_{k+1}v_{max_R(v_{k+1})}$ which is the only edge of $R$ not in $S_1$. If $q < k+1 < max_R(v_{k+1}) < r$, then $v_qv_r$ crosses the edge $v_{k+2}v_{max_R(v_{k+1})-1}$ of $S_1$; and if $ k+1 < q < r < max_R(v_{k+1})$, then $v_qv_r$ would cross the edge $v_{k+2}v_{max_R(v_{k+1})}$ which is an edge of $S_1$ since, by Lemma \ref{maxpla}, $min_R(v_{k+2})=max_R(v_{k+1})$. This ends the proof of our claim. 

    Notice that $F \subset E(S_1)$ since $F \subset E(R) \cap E(Q)$ and $v_{k+1}v_{max_R(v_{k+1})} \in E(R) \setminus E(Q)$; therefore $R$ and $S_1$ are adjacent in $MP(T_n, F)$. 
    
    If $N_{S_1}^+(v_{k+1}) = N_Q^+(v_{k+1})$, then $k(S_1, Q) \geq k+1$ and $m(S_1,Q) \leq t$. By the induction hypothesis $MP(T_n, F)$ contains a path joining $S_1$ and $Q$ which, together with the fact that $R$ and $S_1$ are adjacent in $MP(T_n, F)$, gives a path in $MP(T_n, F)$ that joins $R$ and $Q$.

    If $N_{S_1}^+(v_{k+1}) \neq N_Q^+(v_{k+1})$, then $max_{S_1}(v_{k+1}) > max_Q(v_{k+1})$. We iterate the process defining, one at a time, maximal plane subgraphs $S_2, S_3, \ldots, S_l$ of $T_n$, where $l = max_R(v_{k+1}) - max_Q(v_{k+1})$.

    $$ S_{i+1} = (S_i - v_{k+1}v_{max_{S_i}(v_{k+1})}) + v_{k+2}v_{max_{S_i}(v_{k+1})-1}.$$

    Since $F \subset E(R) \cap E(Q)$ and $v_{k+1}v_{max_{s_i}(v_{k+1})} = v_{k+1}v_{max_R(v_{k+1})-i} \in E(R) \setminus E(Q)$ for $i = 1, 2, \ldots l-1$, $v_{k+1}v_{max_{s_i}(v_{k+1})} \notin F$ for $i = 1, 2, \ldots l-1$. This implies $F\subset E(S_i )$ for $i =1, 2, \ldots, l $. Therefore $R, S_1, S_2, \ldots, S_l$ is a path in $MP(T_n, F)$ joining $R$ and $S_l$. 

    Notice that $N_{S_{i+1}}^+(v_{k+1}) = N_{S_{i}}^+(v_{k+1}) \setminus \{v_{k+1}v_{max_{S_i}(v_{k+1})}\}$ for $i = 1, 2, \ldots, l-1$, therefore $N_{S_l}^+(v_{k+1}) = N_Q^+(v_{k+1})$. Then $k(S_l, Q) \geq k+1$ and $m(S_l, Q) \leq t$. Again by the induction hypothesis, $MP(T_n, F)$ contains a path joining $S_l$ and $Q$ which, together with the path $R, S_1, S_2, \ldots, S_l$, forms a path in $MP(T_n, F)$ that joins $R$ and $Q$.

\end{proof}

\section{Main result}
\label{sec:main}

As shown in Fig. \ref{Nomatch}, not all maximal plane subgraphs of $T_{2m}$ contain perfect matchings. Let $MP_{\cal{M}}(T_{2m})$ denote the subgraph of $MP(T_{2m})$ induced by the set of maximal plane subgraphs of $T_{2m}$ that contain at least one perfect matching.

\begin{figure}[h] 
\begin{center}
\includegraphics[width=.6\textwidth]{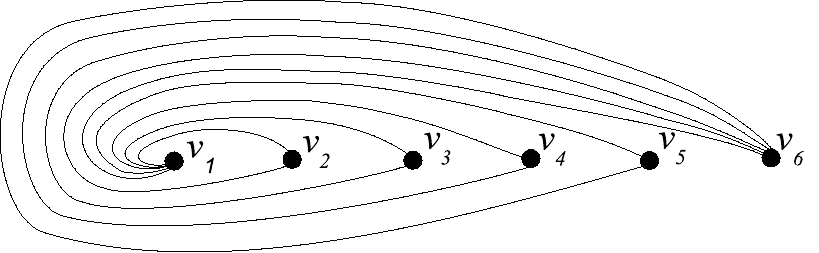}
\label{Nomatch} 
\caption{\small A maximal plane subgraph ot $T_6$ with no perfect matchings.}
\end{center}
\end{figure}

\begin{theorem} \label{mainteo}
For each positive integer $m$, the graph $MP_{\cal{M}}(T_{2m})$  is
connected.
\end{theorem}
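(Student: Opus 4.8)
The plan is to combine Theorem~\ref{teomatchings} with Theorem~\ref{fijas}. The crucial observation is that every plane perfect matching $M$ of $T_{2m}$ is, in particular, a set of pairwise non-crossing edges, so Theorem~\ref{fijas} applies with $F=M$ and tells us that $MP(T_{2m},M)$ is connected. Furthermore, every vertex of $MP(T_{2m},M)$ is a maximal plane subgraph containing the perfect matching $M$, and hence belongs to $MP_{\mathcal{M}}(T_{2m})$. Since $MP(T_{2m},M)$ and $MP_{\mathcal{M}}(T_{2m})$ are both induced subgraphs of $MP(T_{2m})$, the former is a connected subgraph of the latter. It therefore remains only to connect these pieces to one another.

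Given maximal plane subgraphs $R$ and $S$ of $T_{2m}$, each containing a perfect matching, I would first fix perfect matchings $M_R\subseteq R$ and $M_S\subseteq S$, so that $R\in MP(T_{2m},M_R)$ and $S\in MP(T_{2m},M_S)$. By Theorem~\ref{teomatchings} there is a path $M_R=M_0,M_1,\ldots,M_k=M_S$ in $\mathcal{M}(T_{2m})$, so that for each $i$ the symmetric difference $M_i\bigtriangleup M_{i+1}$ is a plane $4$-cycle. By the first paragraph each $MP(T_{2m},M_i)$ is a connected subgraph of $MP_{\mathcal{M}}(T_{2m})$, and the whole problem reduces to producing, for each $i$, a single vertex lying in both $MP(T_{2m},M_i)$ and $MP(T_{2m},M_{i+1})$.

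The key step is to show that $E(M_i)\cup E(M_{i+1})$ is itself a plane subgraph of $T_{2m}$. Take two of its edges $e$ and $f$. If both lie in $M_i$, or both in $M_{i+1}$, they do not cross since each matching is plane. Otherwise one lies in $M_i\setminus M_{i+1}$ and the other in $M_{i+1}\setminus M_i$, so both belong to $M_i\bigtriangleup M_{i+1}$, which is a plane cycle, and again they do not cross. Hence $E(M_i)\cup E(M_{i+1})$ is a set of pairwise non-crossing edges, and I would extend it to a maximal plane subgraph $H_i$ of $T_{2m}$. Since $H_i$ contains both $M_i$ and $M_{i+1}$, we get $H_i\in MP(T_{2m},M_i)\cap MP(T_{2m},M_{i+1})$.

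Finally I would chain the pieces together. Setting $H_{-1}=R$ and $H_k=S$, for each $i=0,1,\ldots,k$ both $H_{i-1}$ and $H_i$ lie in $MP(T_{2m},M_i)$, which is connected, so they are joined by a path there. Concatenating these paths yields a walk
\[
R=H_{-1}\rightsquigarrow H_0\rightsquigarrow H_1\rightsquigarrow\cdots\rightsquigarrow H_{k-1}\rightsquigarrow H_k=S,
\]
each segment being a path inside the corresponding $MP(T_{2m},M_i)$, hence a path in $MP_{\mathcal{M}}(T_{2m})$ all of whose vertices contain a perfect matching and all of whose edges are single edge exchanges. I expect the main obstacle to be precisely the bridging step of the third paragraph: the argument hinges on recognizing that two matchings adjacent in $\mathcal{M}(T_{2m})$ have a planar union, so that one maximal plane subgraph can contain both and act as a transfer vertex between the two connected pieces. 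Everything else is bookkeeping layered on top of the two cited theorems.
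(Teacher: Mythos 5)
Your proposal is correct and follows essentially the same route as the paper: take a path $M_0,\ldots,M_k$ in $\mathcal{M}(T_{2m})$ between matchings of $R$ and $S$ (Theorem~\ref{teomatchings}), observe that the union of two consecutive matchings is non-crossing and extends to a maximal plane subgraph serving as a transfer vertex, and stitch the pieces together using the connectivity of each $MP(T_{2m},M_i)$ (Theorem~\ref{fijas}). Your explicit case check that $M_i\cup M_{i+1}$ is plane is a welcome bit of extra care on a step the paper states more briefly.
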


\begin{proof}
Let $S$ and $R$ be maximal plane subgraphs of $T_{2m}$, each containing at least one perfect matching,
and let $M(S)$ and $M(R)$ be perfect matchings of $S$ and $R$,
respectively. By Theorem~\ref{teomatchings}, the graph $\mathcal{M}\left( T_{2m}\right) $  is
connected and therefore contains a path
$M(S)=M_{0},M_{1}, \ldots ,M_{k}=M(R)$ joining $M(S)$ and
$M(R)$.

By definition, for $i=1,2,\ldots ,k$, $M_{i}$ is obtained from $M_{i-1}$ by
exchanging the edges of $M_{i}$ and $M_{i-1}$  of an alternating non-crossing cycle with four edges. Therefore,  for
$i=1,2,\ldots ,k$,
$M_{i-1}\cup M_{i}$ is a set of pairwise non-crossing edges of $T_{2m}$ that can be extended to a maximal plane subgraph $S_{i}$ of $T_{2m}$.

Let $S_{0}=S$ and $S_{k+1}=R$. Notice that for $i=0,1,\ldots ,k$, $S_{i}$ and
$S_{i+1}$ are maximal plane graphs of $T_{2m}$ that contain $M_{i}$. By
Theorem~\ref{fijas}, the graph $MP(T_{2m}, M_i)$
contains a path joining $S_{i}$ and $S_{i+1}$. Clearly  $MP(T_{2m}, M_i)$ is a subgraph of $MP_{\cal{M}}(T_{2m})$ for each $i=0,1,\ldots ,k$. Therefore the graph $MP_{\cal{M}}(T_{2m})$ contains a path
joining $S$ and $R$.
\end{proof}


\begin{thebibliography}{9}

\bibitem{AGTVW} Aichholzer, O., García, A., Tejel, J. Vogtenhuber, B. and Weinberger, A.: Twisted Ways to Find Plane Structures in Simple Drawings of Complete Graphs. Discrete Comput Geom 71, 40–66 (2024).

\bibitem{HM} Harborth, H. and Mengersen,I.:  Drawings of the complete graph with
maximum number of crossings, Congressus Numerantium 88 (1992), 225--228.

\bibitem{HHNR}
Houle, M. E.,  Hurtado, F.,  Noy, M. and  Rivera-Campo, E.: Graphs of
triangulations and perfect matchings, Graphs Combin. 21
(2005), 325--331.

\bibitem{L} Lawson, C. L.:  Software for $C^{1}$-interpolation, in \textit{
Mathematical Software III} (J. Rice, ed), Academic Press, New York, 161-194 (1977).

\bibitem{OR} Oma\~na-Pulido E. and Rivera-Campo, E.: Notes on the twisted graph, in M\'arquez, A., Ramos, P. and Urrutia, J. (eds), \textit{Computational Geometry}, Lecture Notes in Computer Science 7579, (2012), 119 - 125.

\bibitem{PST} Pach, J., Solymosi, J. and T\'{o}th, G.: Unavoidable configurations
in complete topological graphs, Discrete Comput. Geom. 30,
311--320 (2003).

\bibitem{SZ} Suk, A. and Zeng, J.: Unavoidable Patterns in Complete Simple Topological Graphs. Discrete Comput Geom 73, 79–91 (2025). https://doi.org/10.1007/s00454-024-00658-6



\end{thebibliography}
\end{document}